\numberwithin{equation}{section}
\theoremstyle{plain}
\newtheorem{theorem}{Theorem}[section]
\newtheorem{lemma}[theorem]{Lemma}
\newtheorem{corollary}[theorem]{Corollary}
\theoremstyle{definition}
\newtheorem{example}[theorem]{Example}
\newtheorem{remarks}[theorem]{Remarks}
\begin{document}

\title[Derivations and Hochschild cohomology of zigzag algebras]
{Derivations and Hochschild cohomology of zigzag algebras}

\author{Yanbo Li and Zeren Zheng}

\address{Li: School of Mathematics and Statistics, Northeastern
University at Qinhuangdao, Qinhuangdao, 066004, P.R. China}

\email{liyanbo707@163.com}

\address{Zheng: School of Mathematics and Statistics, Northeastern
University at Qinhuangdao, Qinhuangdao, 066004, P.R. China}

\email{k1169182904@icloud.com}

\begin{abstract}
Let $\Gamma$ be a connected graph without loops, cycles or multiple edges and $Z(\Gamma)$
the corresponding zigzag algebra. Then every Jordan derivation of $Z(\Gamma)$ is a derivation.
Moreover, we will prove that the dimension of 1th Hochschild cohomology group of $Z(\Gamma)$
is one by computing the dimensions of linear spaces spanned by derivations and inner derivations.
This implies that the dimension of the 1th Hochschild cohomology group of each algebra derived equivalent to a zigzag algebra is 1.
\end{abstract}

\thanks{Corresponding Author. Zeren Zheng}

\subjclass[2010]{16E40, 16W25, 16T99}

\keywords{Hochschild cohomology; derivation; zigzag algebra}

\thanks{The work is supported by the Natural Science Foundation of Hebei Province, China (A2021501002); China Scholarship Council (202008130184)  and
NSFC 11871107.}

\maketitle

\section{Introduction}

Zigzag algebras were introduced by Huerfano and Khovanov \cite{HK} in the context of categorification of the
adjoint representation of simply-laced quantum groups. Along this way, they were also connected to Heisenberg algebras, see \cite{CL, RS} for more details.
Zigzag algebras appear in various active research areas in modern mathematics, including symplectic geometry \cite{EL},
representations and blocks of symmetric groups \cite{EK}, blocks of Temperley-Lieb algebras \cite{W}, Soergel bimodules theory \cite{MT} and so on.
Furthermore, there are some generalizations of zigzag algebras.
For example, skew zigzag algebras \cite{C, HK}, higher zigzag algebras \cite{G} and affine zigzag algebras \cite{KM}.

Zigzag algebras are defined in the form of path algebras. Path algebras naturally appear in the study of tensor
algebras of bimodules over semi-simple algebras. It is well known that any
finite-dimensional basic $K$-algebra is  given by a quiver with relations when
$K$ is an algebraically closed field and consequently, many problems of representation theory of finite dimensional algebras can be transferred to path algebras.

Linear maps of associative algebras play significant
roles in various mathematical areas, such as Lie theory, representation theory, matrix theory and operator algebras.
We refer the reader to a series of papers \cite{LW1, LW2, LW3} for some results on linear maps of path algebras.
In particular, it is well known that the derivations of an associative algebra own a Lie algebra structure.
The properties of the Lie algebra can be used to study the associative algebra. Relevant results can be found in \cite{GL, Lin, LD} and the references therein.
Note that the Lie algebra used in \cite{LD} is the quotient of derivation Lie algebra by the Lie ideal of inner derivations.
As a linear space, it is isomorphic to the 1th Hochschild cohomology group, which is an invariance of derived equivalence.
The Hochschild cohomology groups are also closely related to the center and deformation theory of the given algebra.

For the theory of Hochschild cohomology, it is important to study the actual structure of the
Hochschild cohomology groups for particular classes of algebras and many papers are devoted to do it, such as \cite{BE, GL, H, PX, XHJ} and so on.
The main result of this note is to prove the dimension of 1th Hochschild cohomology group of a zigzag algebra is 1
by computing the dimensions of linear spaces spanned by derivations and inner derivations, which will be given
after determining the form of derivations of a ziagzag algebra in Section 3.

\bigskip

\section{Zigzag algebras}

In this section, we recall the definition of zigzag algebras and fix all notations that we need in the next section.
The main references are \cite{ARS, HK}.

A \textit{finite quiver} $Q$ is an oriented graph
with the set of vertices $Q_0$ and the set of arrows between
vertices $Q_1$ being both finite.
For an arrow $\alpha^{i,j}$ from vertex $i$ to vertex $j$, write $s(\alpha^{i,j})=i$ and $e(\alpha^{i,j})=j$.
We often write an arrow by $\alpha$ for simplicity if there is no danger of confusion.
A length $n$ \textit{nontrivial path} $p=(a|\alpha_1\cdots\alpha_n|b)$ in $Q$ is an ordered
sequence of arrows such that
$s(\alpha_1)=a, e(\alpha_{n})=b$ and $e(\alpha_l)=s(\alpha_{l+1})$ for each $l<n$. A \textit{trivial path}
is the symbol $e^i$ for each $i\in Q_0$. In this case, we set
$s(e^i)=e(e^i)=i$.

Let $K$ be a field and $Q$ be a quiver. Then the path algebra
$KQ$ is the $K$-algebra generated by the paths in $Q$
and the product of two paths $x=(a|\alpha_1\cdots\alpha_m|b)$ and
$y=(c|\beta_1\cdots\beta_n|d)$ is defined by
$$xy=\delta_{bc}(a|\alpha_1\cdots\alpha_m\beta_1\cdots\beta_n|d)$$
Clearly, $KQ$ is an associative algebra with the identity
$1=\sum_{i\in \Gamma_0}e^i$, where $e^i(i\in Q_0)$ are
pairwise orthogonal primitive idempotents of $KQ$.

A {\em relation} $\sigma$ on a quiver $Q$ is
a $K$-linear combination of paths $\sigma=\sum_{i=1}^nk_ip_i,$
where $k_i\in K$ and
$e(p_1)=\cdots=e(p_n),\,\,\, s(p_1)=\cdots=s(p_n).$ Moreover, the
number of arrows in each path is assumed to be at least 2. Let
$\rho$ be a set of relations on $\Gamma$. The pair
$(Q, \rho)$ is called a \textit{quiver with relations}.
Denote by $K(Q,
\rho)$ the algebra $K\Gamma/<\rho>$, where $<\rho>$ is the ideal of $K\Gamma$ generated by the set
of relations $\rho$.
For arbitrary element $x\in KQ$, write by $\overline x$ the
corresponding element in $K(Q, \rho)$. We often write
$\overline x$ as $x$ if this is not misleading or confusing.

Given a connected graph $\Gamma$, denote the set of vertices by $\Gamma_0$. Define a quiver $Q$ with $Q_0=\Gamma_0$
and $Q_1=\{\alpha^{ij}, \alpha^{ji}\mid i-j\,\, {\rm in}\,\, \Gamma$\},
where $i-j$ implies that there is a line in $\Gamma$ connecting $i$ with $j$.
Then type $\Gamma$ zigzag algebra $Z(\Gamma)$ is the path algebra of
quiver $Q$ with relations as follows:

(1) All paths of length three or greater are zero.

(2) All paths of length two that are not cycles are zero.

(3) All length-two cycles based at the same vertex are equal.

\smallskip

An example of a zigzag algebra is illustrated below.

\begin{example}
(Zigzag algebra of type A) Let $K$ be a field and $Q$ the following quiver
$$\xymatrix@C=13mm{
  \bullet \ar@<2.5pt>[r]^{\alpha_1}  & \bullet \ar@<2.5pt>[r]^(0.4){\alpha_2}
  \ar@<2.5pt>@[r][l]^(1){1}^{\alpha_1'}^(0){2}
  &\bullet\ar@<2.5pt>@[r][l]^(0.20){3}^(0.6){\alpha_2'}\cdots
  \bullet\ar@<2.5pt>[r]^(0.6){\alpha_{n-1}} & \bullet\ar@<2.5pt>@[r][l]^(0.35){\alpha_{n-1}'}^(0.75){n-1}^(0){n}\\
}$$ with relation $\rho$ given above.
\end{example}

Note that in \cite{ET}, Ehrig and Tubbenhauer gave a slightly different definition of a zigzag algebra. However, they are equivalent.

\smallskip

Denote the cycle $\alpha^{i,j}\alpha^{j,i}$ by $c^i$. Then the following lemma is easily verified.

\begin{lemma}\label{2.1}
The zigzag algebra $Z(\Gamma)$ is a finite algebra with basis
$\{e_i\mid i\in Q_0\}\cup \{c^i\mid i\in Q_0\}\cup Q_1$,
and the center $C(Z(\Gamma))$ has a basis $\{c^i\mid i\in Q_0\}\cup \{1\}$.
Moreover, $\dim Z(\Gamma)=2|Q_0|+|Q_1|$ and $\dim C(Z(\Gamma))=|Q_0|+1$.
\end{lemma}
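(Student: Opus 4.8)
The plan is to prove the basis statement first, then read off the two dimension formulas, and finally determine the center by a direct commutator computation. For the basis I would exploit the grading of $KQ$ by path length: all three families of relations are homogeneous (each of (1) and (2) is a single path, and (3) consists of differences of length-two paths), so the relation ideal $I=\langle\rho\rangle$ is graded, and writing $(KQ)_n$ for the span of the length-$n$ paths we get $Z(\Gamma)_n=(KQ)_n/I_n$. In degrees $0$ and $1$ there are no relations, so these components have bases $\{e^i\mid i\in Q_0\}$ and $Q_1$. In degrees $\ge 3$ relation (1) kills every path, so $I_n=(KQ)_n$ and the component vanishes. The one nontrivial degree is $2$: since any generator of $I$ of length $\ge 3$ contributes nothing in length $2$, the space $I_2$ is exactly the span of the non-cyclic length-two paths together with the differences $\alpha^{i,j}\alpha^{j,i}-\alpha^{i,k}\alpha^{k,i}$ for $j,k$ both adjacent to $i$. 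Hence in $(KQ)_2/I_2$ every non-cyclic path dies and, for each vertex $i$, the cyclic paths based at $i$ all become one class $c^i$; that $c^i\ne 0$ and that cycles at distinct vertices are not identified follows by comparing coefficients — for instance, the sum of the coefficients of the cycles based at a fixed vertex is $0$ on every element of $I_2$ but $1$ on $\alpha^{i,j}\alpha^{j,i}$. (Here one uses that $\Gamma$ is connected with at least one edge, so every vertex has a neighbour.) Thus $Z(\Gamma)_2$ has basis $\{c^i\mid i\in Q_0\}$, the three pieces assemble to the claimed basis, $Z(\Gamma)$ is finite-dimensional, and $\dim Z(\Gamma)=|Q_0|+|Q_1|+|Q_0|=2|Q_0|+|Q_1|$.

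For the center, each $c^i$ is central because $c^i e^j=\delta_{ij}c^i=e^j c^i$, while $c^i\alpha$, $\alpha c^i$ and $c^i c^j$ are products that either fail to compose or have length $\ge 3$, hence vanish; and $1$ is trivially central. Conversely, write a central element as $z=\sum_i a_i e^i+\sum_i b_i c^i+\sum_{\alpha\in Q_1}d_\alpha\alpha$. Comparing $e^k z$ with $z e^k$ gives $\sum_{s(\alpha)=k}d_\alpha\alpha=\sum_{e(\alpha)=k}d_\alpha\alpha$ for every $k$, and since no arrow is a loop this forces every $d_\alpha=0$. Next, comparing $\beta z$ with $z\beta$ for an arrow $\beta=\alpha^{k,l}$ yields $a_l\beta=a_k\beta$ (the $c^i$ contribute nothing, again by length), so $a_k=a_l$ whenever $k$ and $l$ are adjacent; connectedness of $\Gamma$ then makes all $a_i$ equal to a common scalar $a$, whence $z=a\cdot 1+\sum_i b_i c^i$. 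Since $\{1\}\cup\{c^i\mid i\in Q_0\}$ is linearly independent (immediate from the basis, as $1=\sum_i e^i$), it is a basis of $C(Z(\Gamma))$ and $\dim C(Z(\Gamma))=|Q_0|+1$.

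The only step that needs genuine care is the degree-two analysis of $I$: one must check that the relations do not secretly annihilate a cyclic path or impose an unexpected identification among the $c^i$. Everything else reduces, via the path-length grading, to short finite computations.
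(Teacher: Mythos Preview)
Your proof is correct. The paper itself does not supply a proof of this lemma: it simply introduces the notation $c^i=\alpha^{i,j}\alpha^{j,i}$ and states that the lemma ``is easily verified.'' Your path-length grading argument for the basis and your direct commutator computation for the center are exactly the kind of routine verification the paper leaves to the reader, and you correctly isolate the one step requiring care (that the relations in degree two do not kill any $c^i$ or identify cycles at distinct vertices).
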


\bigskip

\section{Derivations and Jordan derivations}

In this section, we describe the form of a derivation of zigzag algebra $Z(\Gamma)$. So let us begin with the definition of a derivation.
Let $K$ be a field and $\mathcal{A}$ a $K$-algebra. Recall that a linear mapping $\Theta$ from $\mathcal{A}$ into itself is called a
\textit{derivation} if
$$
\Theta(ab)=\Theta(a)b+a\Theta(b)
$$
for all $a, b\in \mathcal{A}$.

For simplicity, we use Einstein summation convention from now on. Note that all the places where do not mean sums are easy to know and we will not point them out later.

\begin{lemma}\label{3.1}
A linear mapping $\Theta$ is a derivation of $Z(\Gamma)$ if and only if
\begin{enumerate}
\item[\rm{(1)}] $\Theta(e^i)=t^i_{mi}\alpha^{mi}+t^i_{in}\alpha^{in}$;

\smallskip

\item[\rm{(2)}] $\Theta(\alpha^{ij})=t^{ij}_{ij}\alpha^{ij}+t_{ji}^{j}c^{i}+t_{ji}^{i}c^{j}$;

\smallskip

\item[\rm{(3)}] $\Theta(c^i)=(t^{ij}_{ij}+t^{ji}_{ji})c^i$,
\end{enumerate}
where all coefficients are in $K$ and $t^j_{ji}=-t^i_{ji}$.
\end{lemma}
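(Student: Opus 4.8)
The plan is to verify both directions by direct computation on the basis of $Z(\Gamma)$ described in Lemma~\ref{2.1}, exploiting that a linear map is a derivation iff the Leibniz rule holds on a generating set of basis products. First I would record the multiplication table: $e^i\alpha^{ij}=\alpha^{ij}=\alpha^{ij}e^j$, $\alpha^{ij}\alpha^{ji}=c^i$, $\alpha^{ij}\alpha^{jk}=0$ when $i\neq k$, $\alpha^{ij}c^j=0=c^i\alpha^{ij}$, and $c^ic^i=0$; all other products of basis elements vanish. Also each $c^i$ is central, $\sum_i e^i=1$, and the $e^i$ are orthogonal idempotents. The idea for the ``only if'' direction is to apply $\Theta$ to each of these defining relations and read off constraints on the images of the generators.

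For the ``only if'' direction, the key steps are as follows. Applying $\Theta$ to $e^i=e^ie^i$ forces $\Theta(e^i)\in e^iZ(\Gamma)+Z(\Gamma)e^i$ with the $e^i$-component killed; applying $\Theta$ to the orthogonality relations $e^ie^j=0$ ($i\neq j$) and summing, and using $\Theta(1)=0$ (from $1=1\cdot 1$, so $2\Theta(1)=\Theta(1)$ hence $\Theta(1)=0$ in characteristic $\neq 2$; I would note the characteristic-2 case is handled by the orthogonality relations directly), one pins $\Theta(e^i)$ down to a combination of arrows entering and leaving $i$, i.e.\ the shape in (1). Then applying $\Theta$ to $\alpha^{ij}=e^i\alpha^{ij}e^j$ and substituting (1) shows $\Theta(\alpha^{ij})$ lies in $e^iZ(\Gamma)e^j$ plus correction terms, which by the basis description is spanned by $\alpha^{ij}$, $c^i$ and $c^j$; this gives the shape in (2), and matching the arrow-coefficients against those appearing in $\Theta(e^i),\Theta(e^j)$ yields the notational identifications made in the statement. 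Applying $\Theta$ to $c^i=\alpha^{ij}\alpha^{ji}$ together with (2) produces (3) and the relation among the coefficients. Finally, applying $\Theta$ to the vanishing relations — length-three paths, non-cycle length-two paths such as $\alpha^{ij}\alpha^{jk}=0$, and $\alpha^{ij}c^j=0$, $c^ic^i=0$ — must give $0$; the genuinely constraining one is $\alpha^{ij}\alpha^{jk}=0$, whose image forces the antisymmetry relation $t^j_{ji}=-t^i_{ji}$ and ensures no new terms sneak in. I would also double-check the relation (3) in \cite{HK}, that the two length-two cycles at a vertex are identified, is respected: $\alpha^{ij}\alpha^{ji}=\alpha^{ik}\alpha^{ki}$ as elements named $c^i$, so $\Theta$ applied to this identity must be consistent, which is automatic once (2) and (3) hold.

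For the ``if'' direction, I would simply take an arbitrary linear map $\Theta$ of the form (1)--(3) with the stated coefficient relation and verify the Leibniz rule $\Theta(xy)=\Theta(x)y+x\Theta(y)$ on all pairs $x,y$ of basis elements. By bilinearity this is a finite check; most pairs multiply to $0$ on the left side, and one must confirm the right side also vanishes — this is where the antisymmetry $t^j_{ji}=-t^i_{ji}$ is used, e.g.\ in the computation for $x=\alpha^{ij}$, $y=\alpha^{jk}$ with $i\neq k$. The nontrivial positive checks are $e^i\alpha^{ij}=\alpha^{ij}$, $\alpha^{ij}\alpha^{ji}=c^i$, and $e^ic^i=c^i$, each of which should reproduce (2) or (3) exactly.

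The main obstacle I anticipate is bookkeeping rather than conceptual difficulty: keeping the upper/lower index conventions for the coefficients $t$ consistent across the relations that connect $\Theta(e^i)$, $\Theta(e^j)$ and $\Theta(\alpha^{ij})$, and making sure that the summation convention is applied only where sums are genuinely intended (the statement explicitly warns about this). A secondary subtlety is the behaviour in characteristic $2$, where $\Theta(1)=0$ cannot be deduced from $1=1\cdot 1$ alone; I would instead derive the needed information from the idempotent and orthogonality relations, which hold over any field, so the lemma is characteristic-free.
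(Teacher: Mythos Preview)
Your overall strategy is the same as the paper's: apply the Leibniz rule to the relations among basis elements (idempotency, orthogonality, $\alpha^{ij}=e^i\alpha^{ij}=\alpha^{ij}e^j$, $c^i=\alpha^{ij}\alpha^{ji}$) to pin down the images, and then verify the converse by a finite check. The paper does exactly this, getting (1) from $e^i=(e^i)^2$ via the trick $e^i\Theta(e^i)e^i=0$, then (2) from $\alpha^{ij}=e^i\alpha^{ij}$ and $\alpha^{ij}=\alpha^{ij}e^j$, and (3) from $c^i=\alpha^{ij}\alpha^{ji}$.

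There is one concrete slip in your plan: you locate the antisymmetry relation $t^{j}_{ji}=-t^{i}_{ji}$ at the wrong spot. If you actually compute $\Theta(\alpha^{ij})\alpha^{jk}+\alpha^{ij}\Theta(\alpha^{jk})$ using the shape (2), every term dies for structural reasons (length-three paths vanish, and the $c$'s sit at the wrong vertex), so the relation $\alpha^{ij}\alpha^{jk}=0$ imposes \emph{no} constraint and in particular does not produce the antisymmetry. The antisymmetry really comes from the orthogonality $e^ie^j=0$: with (1) in hand one has $\Theta(e^i)e^j+e^i\Theta(e^j)=(t^{i}_{ij}+t^{j}_{ij})\alpha^{ij}$, and this must vanish. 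This is precisely how the paper derives it (equation (3.5)). The same correction applies to your converse direction: the place where you \emph{need} $t^{j}_{ji}=-t^{i}_{ji}$ in the Leibniz check is the pair $(e^i,e^j)$, not $(\alpha^{ij},\alpha^{jk})$.

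A minor remark: your characteristic-$2$ worry is unnecessary. From $\Theta(1)=2\Theta(1)$ one gets $\Theta(1)=0$ over any field (subtract), and in any case the paper's route through $e^i\Theta(e^i)e^i=0$ is characteristic-free.
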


\begin{proof}
Let $\Theta$ be a derivation of $Z(\Gamma)$ and assume that $$\Theta(e^i)=k_r^ie^r+t_{mn}^i\alpha^{mn}+l_r^ic^r \eqno(3.1)$$
Note that $e^i$ is an idempotent.
This implies that $$\Theta(e^i)=\Theta(e^i)e^i+e^i\Theta(e^i)\eqno(3.2)$$ and consequently,
$$e^i\Theta(e^i)e^i=0.\eqno(3.3)$$ Combining (3.1) with (3.3) shows that $$k_i^i=l_i^i=0. \eqno(3.4)$$
Moreover, substituting (3.1) into (3.2) and using (3.4) gives that (1) holds.
Note that $e^ie^j=0$ and hence $$\Theta(e^i)e^j+e^i\Theta(e^j)=0.\eqno(3.5)$$
Now substituting (1) into (3.5) yields $t^j_{ji}=-t^i_{ji}.$

In order to prove (2), suppose that $$\Theta(\alpha^{ij})=k_r^{ij}e^r+t_{mn}^{ij}\alpha^{mn}+l_r^{ij}c^r.$$ Note that $\alpha^{ij}=e^i\alpha^{ij}$ if $i\neq j$ and thus
$$
\Theta(\alpha^{ij})=\Theta(e^i\alpha^{ij})=\Theta(e^i)\alpha^{ij}+e^i\Theta(\alpha^{ij}).\eqno(3.6)
$$
By substituting (1) into (3.6) we get
$$\Theta(\alpha^{ij})=t_{ji}^ic^i+k_i^{ij}e^i+t_{in}^{ij}\alpha^{in}+l_i^{ij}c^i.\eqno(3.7)$$
Similarly, applying the fact $\alpha^{ij}=\alpha^{ij}e^j$ leads to
$$\Theta(\alpha^{ij})=t_{ji}^ic^i+k_j^{ij}e^j+t_{mj}^{ij}\alpha^{mj}+l_j^{ij}c^j.\eqno(3.8)$$
Then we complete the proof of (2) by comparing (3.7) with (3.8).
Recall that $c^i=\alpha^{ij}\alpha^{ji}$. Then (3) can be obtained from (2) by easy computation.

\smallskip

Conversely, if $\Theta$ is a linear map on $Z(\Gamma)$ satisfying the conditions (1)-(3),
then it is easy to check that $\Theta$ is a derivation. We omit the details here.
\end{proof}

\begin{remarks}
(1) Guo and Li \cite{GL} studied the form of a derivation of a path algebra of a quiver without relations, and thus their result can not be used in this note.

(2) For all $a,b\in \mathcal{A}$, denote the \textit{Jordan product} by $a\circ b=ab+ba$.
Then a linear mapping from $\mathcal{A}$ into itself is called a {\em Jordan derivation} if
$$\Theta(a\circ b)=\Theta(a)\circ b+a\circ \Theta(b).$$
Every derivation is obviously a Jordan
derivation. The converse statement is not true in
general. Moreover, an \textit{antiderivation} is a linear mapping of $\mathcal{A}$ if
$$\Theta (ab)=\Theta (b)a+b\Theta(a)$$ for all $a, b\in
\mathcal{A}$. Note that there has been an increasing interest in the study of Jordan derivations of various
algebras. The standard problem is to find out whether a Jordan derivation degenerate to a derivation.
We refer the reader to \cite{Br, Cu, He, XW, ZY} and the references therein for relevant results on this topic.

As in Lemma \ref{3.1}, we can determine the forms of anti-derivations and Jordan derivations of $Z(\Gamma)$.
Then the following results are easily verified. We omit the details and leave them to the reader.
\begin{enumerate}
\item[\rm{(i)}] Every anti-derivation of $Z(\Gamma)$ is $0$.

\item[\rm{(ii)}] Every Jordan derivation of $Z(\Gamma)$ is a derivation.
\end{enumerate}
\end{remarks}

\bigskip

\section{Hochschild cohomology of zigzag algebras}
In this section, we will compte the 1th Hochschild cohomology group of a zigzag algebra.
Denote the linear space spanned by all the derivations of $Z(\Gamma)$ by $Der(Z(\Gamma))$. We need to compute the dimension of $Der(Z(\Gamma))$.

\begin{lemma}\label{3.3}
Let $\Gamma$ be a connected finite graph without loops, cycles or multiple edges. If $|\Gamma_0|>1$, then $\dim Der (Z(\Gamma))=3|\Gamma_0|-2$.
\end{lemma}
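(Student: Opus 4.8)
The plan is to parametrize a general derivation $\Theta$ using Lemma~\ref{3.1} and simply count the independent parameters, then subtract the relations forced by the structure of $\Gamma$. By Lemma~\ref{3.1}, a derivation is completely determined by the scalars $t^i_{mi}$ and $t^i_{in}$ appearing in $\Theta(e^i)$ (equivalently all the $t^i_{ji}$ for edges $i-j$), together with the scalars $t^{ij}_{ij}$ appearing in $\Theta(\alpha^{ij})$; everything else ($\Theta(c^i)$ and the $c$-components of $\Theta(\alpha^{ij})$) is then expressed in terms of these. So first I would list all the free-looking coefficients, then impose the only constraint recorded in the lemma, namely $t^j_{ji}=-t^i_{ji}$ for each edge, and check no further compatibility conditions are hidden. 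The subtle point is that some of the coefficients $t^i_{ji}$ reappear in formula (2) as the coefficients of $c^i,c^j$ in $\Theta(\alpha^{ij})$, so I must be careful not to double-count: the genuinely free data is \{one scalar per edge from the $t_{ji}$'s after the sign relation\} $\cup$ \{one scalar $t^{ij}_{ij}$ per arrow $\alpha^{ij}$\}.

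Next I would do the bookkeeping. Since $\Gamma$ is a tree-like graph (connected, no loops, no cycles, no multiple edges), it has exactly $|\Gamma_0|-1$ edges, hence $|Q_1|=2(|\Gamma_0|-1)$ arrows. The sign relation $t^j_{ji}=-t^i_{ji}$ means the data $\{t^i_{ji},t^j_{ji}\}$ attached to an edge $i-j$ contributes a single free scalar, giving $|\Gamma_0|-1$ parameters from that source. The coefficients $t^{ij}_{ij}$ — one per arrow — contribute $2(|\Gamma_0|-1)$ parameters, but here I expect to find that these are \emph{not} all independent: the relation $\Theta(c^i)=(t^{ij}_{ij}+t^{ji}_{ji})c^i$ must be consistent when the vertex $i$ has several neighbours (the cycle $c^i$ can be written as $\alpha^{ij}\alpha^{ji}$ for any neighbour $j$ of $i$, since all length-two cycles at $i$ are equal), forcing $t^{ij}_{ij}+t^{ji}_{ji}$ to be independent of the chosen neighbour $j$. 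Tracking this web of equalities over the tree is the real combinatorial content of the count.

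The main obstacle is therefore precisely this consistency analysis of the arrow-coefficients $t^{ij}_{ij}$. Concretely I would argue: fix a vertex $i$ of degree $d_i$; the $2d_i$ arrow-coefficients $t^{ij}_{ij}$ and $t^{ji}_{ji}$ (over neighbours $j$ of $i$) together with the analogous constraint from each endpoint must satisfy that $t^{ij}_{ij}+t^{ji}_{ji}$ is a single scalar $\lambda_i$ attached to the vertex $i$. One then has, for each arrow $\alpha^{ij}$, the pair of relations $t^{ij}_{ij}+t^{ji}_{ji}=\lambda_i$ and also $=\lambda_j$ coming from the other endpoint's cycle — wait, more carefully: the cycle relation at $i$ gives $\lambda_i=t^{ij}_{ij}+t^{ji}_{ji}$ and the cycle relation at $j$ gives $\lambda_j=t^{ji}_{ji}+t^{ij}_{ij}$, so in fact $\lambda_i=\lambda_j$ for every edge, and since $\Gamma$ is connected \emph{all} the $\lambda$'s coincide: there is a single global scalar $\lambda$. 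Thus the arrow-coefficient data is: choose $t^{ij}_{ij}$ freely for each arrow subject to $t^{ij}_{ij}+t^{ji}_{ji}=\lambda$ for a fixed common $\lambda$; that is $\lambda$ (1 parameter) plus one free coefficient per edge ($|\Gamma_0|-1$ parameters), since the partner is then determined — total $|\Gamma_0|$ parameters from the arrows.

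Finally I would assemble the count: $(|\Gamma_0|-1)$ from the edge-attached $t_{ji}$ scalars, plus $|\Gamma_0|$ from the arrow $t^{ij}_{ij}$ data analyzed above, and I need to reach $3|\Gamma_0|-2$, so I am still short by $|\Gamma_0|-1$; this tells me that in fact the $t_{ji}$'s attached to the two orientations of an edge are \emph{not} linked by a single equation after all, or equivalently that the coefficients in $\Theta(e^i)$ — which involve $t^i_{ji}$ summed over all neighbours — give $\sum_i d_i = 2(|\Gamma_0|-1)$ raw scalars cut down by the $|\Gamma_0|-1$ sign relations to $|\Gamma_0|-1$, and then I recount: writing it out, the free parameters are the $t^i_{ji}$ with $i<j$ say (one per edge, $|\Gamma_0|-1$ of them), a global $\lambda$ ($1$), and one $t^{ij}_{ij}$ per edge-orientation-choice ($|\Gamma_0|-1$), plus the $t^{ji}_{ji}$ being forced — hmm, that is $2|\Gamma_0|-1$, still not enough, so I must instead \emph{not} impose $\lambda_i=\lambda_j$ globally but rather realize the $c$-component relations only constrain within a single vertex. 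The cleanest route is to set up the linear map $\Theta \mapsto (\text{its coefficient vector})$, write the defining relations of Lemma~\ref{3.1} as a linear system, and compute the rank directly; I expect the answer $3|\Gamma_0|-2$ to emerge as (number of basis-image coefficients) minus (rank of the relation system), and verifying that rank is the one calculation I would actually grind through. As a sanity check I would test $|\Gamma_0|=2$ (a single edge, $A_2$): then $3|\Gamma_0|-2=4$, which should match a direct enumeration of derivations of the $4+2=$ well, $\dim Z = 2\cdot 2 + 2 = 6$-dimensional zigzag algebra of type $A_2$.
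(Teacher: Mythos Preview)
Your direct parameter-counting strategy is sound and is genuinely different from the paper's proof, which proceeds by induction on $|\Gamma_0|$: the base case $|\Gamma_0|=2$ is handled by writing down the $6\times 6$ coefficient matrix explicitly, and the inductive step deletes a leaf vertex $n$, invokes the inductive hypothesis on $\Gamma\setminus\{n\}$, and then checks from Lemma~\ref{3.1} that re-adjoining $n$ introduces exactly three new independent parameters. The induction has the virtue that the ``no hidden relations'' issue is handled one leaf at a time; your global count is shorter but requires you to verify independence of all the constraints at once.

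The reason your count keeps coming up short is a bookkeeping slip in the $t^i_{ji}$ block: the sign relation $t^j_{ji}=-t^i_{ji}$ of Lemma~\ref{3.1} is one equation per \emph{arrow}, not per edge. An edge $\{i,j\}$ carries two arrows $\alpha^{ij}$ and $\alpha^{ji}$, and each arrow has its own pair of coefficients (one at each endpoint) collapsing to a single free scalar; so this block contributes $|Q_1|=2(|\Gamma_0|-1)$ free parameters, not $|\Gamma_0|-1$. Your analysis of the $t^{ij}_{ij}$ block is already correct: well-definedness of $\Theta(c^i)$ forces the edge-sum $s_{\{i,j\}}:=t^{ij}_{ij}+t^{ji}_{ji}$ to be independent of the neighbour $j$ of $i$, and since $s_{\{i,j\}}$ is symmetric in $i,j$ and $\Gamma$ is connected this says all $|\Gamma_0|-1$ edge-sums coincide, i.e.\ $|\Gamma_0|-2$ independent linear constraints on $2(|\Gamma_0|-1)$ raw scalars, leaving $|\Gamma_0|$. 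Adding the two blocks gives
\[
2(|\Gamma_0|-1)\;+\;|\Gamma_0|\;=\;3|\Gamma_0|-2,
\]
and no further ``grind'' is needed. Your $\lambda_i=\lambda_j$ observation is exactly right; it is the same relation the paper uses (as its equation (3.10)) in the inductive step.
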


\begin{proof}
We prove this lemma by induction on the number of vertices of $\Gamma$.

Let $|\Gamma_0|=2$. Then $Z(\Gamma)$ has a basis $\{e^1, e^2, \alpha^{12}, \alpha^{21}, c^1, c^2\}$.
For an arbitrary derivation $\Theta$ of $Z(\Gamma)$, we have from Lemma \ref{3.1} that
$$\Theta(e^1, e^2, \alpha^{12}, \alpha^{21}, c^1, c^2)=(e^1, e^2, \alpha^{12}, \alpha^{21}, c^1, c^2)A,$$
where $A$ is of the form $$\begin{bmatrix}
0 & 0 & 0 & 0 & 0 & 0\\
0 & 0 & 0 & 0 & 0 & 0\\
t_{12}^1 & -t_{12}^1 & t_{12}^{12} & 0 & 0 & 0\\
t_{21}^1 & -t_{21}^1 & 0 & t_{21}^{21} & 0 & 0\\
0 & 0 & t_{21}^2 & -t_{12}^1 & t_{12}^{12}+t_{21}^{21} & 0\\
0 & 0 & 0 & t_{12}^1 & 0 & t_{12}^{12}+t_{21}^{21}
\end{bmatrix}$$
Clearly, the space spanned by derivations of $Z(\Gamma)$ is isomorphic to the linear space spanned by the matrices $A$.
Then it is easy to check that $$\dim Der Z(\Gamma)=4=3\times 2-2,$$ that is, the lemma holds when $|\Gamma_0|=2$.

Suppose that the lemma holds for $|\Gamma_0|=n-1$. Let $\Gamma$ be a connected finite graph without loops, cycles or multiple edges and suppose that $|\Gamma_0|=n$.
Then there must exists a vertex that is connected with only one other vertex. Without lose of generality, we assume the vertex is labeled by $n$ and it is connected with vertex $n-1$. Then take a vertex being connected with vertex $n-1$ that is different from vertex $n$ and assume it is labeled by $n-2$. Denote by $\Gamma \setminus \{n\}$ the graph obtained from $\Gamma$ by deleting vertex $n$ and the corresponding line. Clearly, we can get a basis of $Z(\Gamma)$ by add elements $\{e^n, c^n, \alpha^{n-1, n}, \alpha^{n, n-1}\}$ to a basis of $Z(\Gamma\setminus \{n\})$. Furthermore, every derivation of $Z(\Gamma)$ can be obtained from one of $Z(\Gamma\setminus \{n\})$ by determining the images of the above exceptional basis by Lemma \ref{3.1},
\begin{enumerate}
\item[{\rm(1)}] $\Theta(e^n)=-t_{n, n-1}^n\alpha^{n, n-1}-t_{n-1, n}^n\alpha^{n-1, n}$;

\item[{\rm(2)}] $\Theta(c^n)=(t_{n-1, n}^{n-1, n}+t_{n, n-1}^{n, n-1})c^{n-1}$;

\item[{\rm(3)}] $\Theta(\alpha^{n-1, n})=t_{n-1, n}^{n-1, n}\alpha^{n-1, n}+t_{n, n-1}^nc^{n-1}+t_{n, n-1}^{n-1}c^n;$

\item[{\rm(4)}] $\Theta(\alpha^{n, n-1})=t_{n, n-1}^{n, n-1}\alpha^{n, n-1}+t_{n-1, n}^{n-1}c^{n-1}-t_{n-1, n}^{n-1}c^n,$
\end{enumerate}
where $$t_{n, n-1}^n=-t_{n, n-1}^{n-1},\,\,\, t_{n-1, n}^n=-t_{n-1, n}^{n-1},\eqno(3.9)$$
and change the image of $e^{n-1}$ as follows
$$\Theta(e^{n-1})=t_{n-2, n-1}^{n-1}\alpha^{n-2, n-1}+t_{n, n-1}^{n-1}\alpha^{n, n-1}+t_{n-1, n-2}^{n-1}\alpha^{n-1, n-2}+t_{n-1, n}^{n-1}\alpha^{n-1, n}.$$
Still by Lemma \ref{3.1}, we have
$$\Theta(c^{n-1})=(t_{n-2, n-1}^{n-2, n-1}+t_{n-1, n-2}^{n-1, n-2})c^{n-1}=(t_{n-1, n}^{n-1, n}+t_{n, n-1}^{n, n-1})c^{n-1},$$ that is,
$$(t_{n-2, n-1}^{n-2, n-1}+t_{n-1, n-2}^{n-1, n-2})=(t_{n-1, n}^{n-1, n}+t_{n, n-1}^{n, n-1}). \eqno(3.10)$$
Consider $(t_{n-2, n-1}^{n-2, n-1}+t_{n-1, n-2}^{n-1, n-2})$ as a constant and combine (3.9) with (3.10)
as a system of linear equations. Then it is easy to check that the solution set is a 3 dimensional manifold. Consequently,
$$\dim Der(Z(\Gamma))-\dim Der(Z(\Gamma\setminus n))=3.$$
That is, the lemma holds for $|\Gamma_0|=n$. This completes the proof.
\end{proof}

Given $x\in \mathcal{A}$, define a linear mapping $\Theta(a)=[x, a]$ for all $a\in \mathcal{A}$, where $[x, a]=xa-ax$.
Then $\Theta$ is a derivation of $\mathcal{A}$, which is called an inner derivation.
Clearly, if $x\in C(\mathcal{A})$, then the inner derivation defined by $x$ is zero. Write the linear space spanned by all inner derivations by $IDer(\mathcal{A})$.
Then if $\mathcal{A}$ is a finite dimensional algebra, we have $$\dim IDer({\mathcal{A}})=\dim \mathcal{A}-\dim C(\mathcal {A}).\eqno(3.10)$$

Combining Lemma \ref{2.1} with (3.10) leads to the following result.

\begin{lemma}\label{3.4}
$\dim IDer (Z(\Gamma))=|Q_0|+|Q_1|-1.$
\end{lemma}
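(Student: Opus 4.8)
The statement follows at once by combining the two facts already recorded in the excerpt: the dimension formula $\dim IDer(\mathcal{A})=\dim\mathcal{A}-\dim C(\mathcal{A})$ for a finite-dimensional algebra, and the dimension count in Lemma \ref{2.1}. So the plan is essentially to justify the dimension formula and then substitute.

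First I would make precise why the formula holds. Consider the linear map $\mathrm{ad}\colon Z(\Gamma)\to \operatorname{End}_K(Z(\Gamma))$ sending $x$ to the map $a\mapsto [x,a]$. By definition the image of $\mathrm{ad}$ is exactly the set of inner derivations, and since this image is a linear subspace it already equals $IDer(Z(\Gamma))$. The kernel of $\mathrm{ad}$ consists of those $x$ with $xa=ax$ for all $a$, i.e.\ precisely the center $C(Z(\Gamma))$. Since $Z(\Gamma)$ is finite-dimensional (Lemma \ref{2.1}), the rank--nullity theorem gives $\dim IDer(Z(\Gamma))=\dim Z(\Gamma)-\dim C(Z(\Gamma))$, which is the asserted identity (3.10).

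Then I would simply insert the values from Lemma \ref{2.1}, namely $\dim Z(\Gamma)=2|Q_0|+|Q_1|$ and $\dim C(Z(\Gamma))=|Q_0|+1$, to obtain
$$\dim IDer(Z(\Gamma))=\bigl(2|Q_0|+|Q_1|\bigr)-\bigl(|Q_0|+1\bigr)=|Q_0|+|Q_1|-1,$$
as claimed. There is no real obstacle here: the only point that needs a line of justification is the kernel--image identification for $\mathrm{ad}$, and that is standard; everything else is bookkeeping with the numbers supplied by Lemma \ref{2.1}.
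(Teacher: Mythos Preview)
Your proposal is correct and follows exactly the paper's approach: the paper states the lemma as an immediate consequence of combining formula~(3.10), $\dim IDer(\mathcal{A})=\dim\mathcal{A}-\dim C(\mathcal{A})$, with the dimension counts from Lemma~\ref{2.1}. Your rank--nullity justification of~(3.10) via the $\mathrm{ad}$ map is a welcome elaboration of a step the paper leaves implicit.
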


Now we are in a position to give the main result of this note.

\begin{theorem}\label{3.5}
Let $\Gamma$ be a finite connected graph without loops, cycles or multiple edges and $Z(\Gamma)$ the associated zigzag algebra. If $|\Gamma_0|>1$,
then the dimension of 1th Hochschild cohomology group of $Z(\Gamma)$ is 1.
\end{theorem}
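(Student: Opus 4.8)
The plan is to read off the dimension of $HH^1(Z(\Gamma))$ from the two dimension counts already established. Recall the standard fact that for a finite-dimensional $K$-algebra $\mathcal{A}$ one has $HH^1(\mathcal{A}) \cong Der(\mathcal{A})/IDer(\mathcal{A})$ as $K$-vector spaces, and that $IDer(\mathcal{A})\subseteq Der(\mathcal{A})$ since every inner derivation is a derivation. Hence
$$\dim HH^1(Z(\Gamma)) = \dim Der(Z(\Gamma)) - \dim IDer(Z(\Gamma)),$$
so it suffices to compare Lemma \ref{3.3} with Lemma \ref{3.4}.

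First I would record the combinatorial data of $Q$. Since $\Gamma$ is a connected finite graph without loops, cycles, or multiple edges, it is a tree; a tree with $|\Gamma_0|$ vertices has exactly $|\Gamma_0|-1$ edges. By the construction of $Q$ from $\Gamma$, we have $Q_0=\Gamma_0$, and each edge $i-j$ of $\Gamma$ contributes the two arrows $\alpha^{ij},\alpha^{ji}$; therefore $|Q_0|=|\Gamma_0|$ and $|Q_1| = 2(|\Gamma_0|-1)$.

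Next I would substitute these into Lemma \ref{3.4}:
$$\dim IDer(Z(\Gamma)) = |Q_0| + |Q_1| - 1 = |\Gamma_0| + 2(|\Gamma_0|-1) - 1 = 3|\Gamma_0| - 3,$$
while Lemma \ref{3.3} (applicable because $|\Gamma_0|>1$) gives $\dim Der(Z(\Gamma)) = 3|\Gamma_0|-2$. Subtracting,
$$\dim HH^1(Z(\Gamma)) = (3|\Gamma_0|-2) - (3|\Gamma_0|-3) = 1,$$
as claimed.

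The argument is short because all the real work sits in the preceding lemmas; the only point that needs a moment's care is the edge count, i.e. observing that the hypotheses on $\Gamma$ force it to be a tree so that $|Q_1|$ is pinned down exactly as $2(|\Gamma_0|-1)$. The main genuine obstacle in the whole development — already handled before the theorem — is the inductive computation of $\dim Der(Z(\Gamma))$ in Lemma \ref{3.3}, where one must check that adjoining a pendant vertex increases the derivation space by exactly $3$; given that, together with the well-known identity $\dim IDer(\mathcal{A}) = \dim\mathcal{A} - \dim C(\mathcal{A})$ and Lemma \ref{2.1}, the theorem follows immediately.
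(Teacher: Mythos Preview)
Your proof is correct and follows exactly the paper's approach: observe that the hypotheses make $\Gamma$ a tree so that $|Q_1|=2(|Q_0|-1)$, and then subtract the dimension in Lemma~\ref{3.4} from that in Lemma~\ref{3.3} to obtain $1$. The paper's own proof is a two-line version of what you wrote.
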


\begin{proof}
It is clear that $|Q_1|=2(|Q_0|-1)$ by the properties of $\Gamma$. Then the theorem is a direct corollary of Lemma \ref{3.3} and Lemma \ref{3.4}.
\end{proof}

Since arbitrary multiplicity one Brauer tree algebra is derived equivalent to a
zigzag algebra, we have the following obvious corollary, which can not be obtained from \cite[Theorem 4.4]{H}.
For the definition of a Brauer tree algebra, we refer the reader to \cite{ARS}.

\begin{corollary}
The dimension of the 1th Hochschild cohomology of a multiplicity one Brauer tree algebra is 1.
\end{corollary}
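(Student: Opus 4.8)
The plan is to prove Theorem \ref{3.5} exactly as the paper hints: the first Hochschild cohomology group $HH^1(Z(\Gamma))$ is, by definition, the quotient $Der(Z(\Gamma))/IDer(Z(\Gamma))$, so its dimension is simply $\dim Der(Z(\Gamma)) - \dim IDer(Z(\Gamma))$. Everything has already been set up: Lemma \ref{3.3} gives $\dim Der(Z(\Gamma)) = 3|\Gamma_0| - 2$ when $|\Gamma_0| > 1$, and Lemma \ref{3.4} gives $\dim IDer(Z(\Gamma)) = |Q_0| + |Q_1| - 1$. So the substantive content of the proof is purely the arithmetic of combining these two formulas, together with one graph-theoretic observation about $|Q_1|$.

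First I would record the structural fact that since $\Gamma$ is a finite connected graph on $n = |\Gamma_0|$ vertices with no loops, no cycles, and no multiple edges, $\Gamma$ is a tree, hence has exactly $n-1$ edges. Since the quiver $Q$ associated to $Z(\Gamma)$ replaces each edge $i - j$ of $\Gamma$ by the pair of arrows $\alpha^{ij}, \alpha^{ji}$, we get $|Q_1| = 2(n-1) = 2(|Q_0| - 1)$, while $|Q_0| = |\Gamma_0| = n$. This is the only place where the hypotheses on $\Gamma$ (beyond connectedness and $|\Gamma_0|>1$) are invoked at this stage.

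Then I would substitute into Lemma \ref{3.4}:
$$\dim IDer(Z(\Gamma)) = |Q_0| + |Q_1| - 1 = n + 2(n-1) - 1 = 3n - 3.$$
Comparing with $\dim Der(Z(\Gamma)) = 3n - 2$ from Lemma \ref{3.3}, we obtain
$$\dim HH^1(Z(\Gamma)) = \dim Der(Z(\Gamma)) - \dim IDer(Z(\Gamma)) = (3n-2) - (3n-3) = 1.$$
This finishes the proof.

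There is essentially no obstacle in this final step — it is a one-line consequence of the two preceding lemmas once the tree count $|Q_1| = 2(|\Gamma_0|-1)$ is noted. The genuine work lies upstream: the classification of derivations in Lemma \ref{3.1} and, above all, the inductive dimension count in Lemma \ref{3.3}, where one deletes a pendant vertex $n$ and checks that adjoining the four new basis elements $e^n, c^n, \alpha^{n-1,n}, \alpha^{n,n-1}$ adds exactly $3$ to the dimension of the derivation space (the constraint (3.10) forcing the trace-type quantities at $n-1$ to match cuts the naive count down by one). Since the statement allows us to assume those lemmas, the remaining task here is only the bookkeeping above, and I would present it essentially verbatim as the paper does.
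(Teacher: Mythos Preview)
Your argument is a correct proof of Theorem~\ref{3.5}, but the statement you were asked to prove is the \emph{Corollary} about multiplicity one Brauer tree algebras, and you never address those algebras at all. A multiplicity one Brauer tree algebra is not, in general, literally equal to a zigzag algebra $Z(\Gamma)$, so computing $\dim HH^1(Z(\Gamma))$ does not by itself say anything about $HH^1$ of such an algebra. The missing step is the one the paper states immediately before the corollary: every multiplicity one Brauer tree algebra is \emph{derived equivalent} to a zigzag algebra, and Hochschild cohomology is a derived invariant. Only after invoking both of these facts can you transport the conclusion $\dim HH^1(Z(\Gamma))=1$ from Theorem~\ref{3.5} over to the Brauer tree side.

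Concretely, your write-up should take Theorem~\ref{3.5} as already established (it precedes the corollary in the paper) and then argue: given a multiplicity one Brauer tree algebra $A$, there exists a finite tree $\Gamma$ with $|\Gamma_0|>1$ and a derived equivalence between $A$ and $Z(\Gamma)$; since $HH^1$ is invariant under derived equivalence, $\dim HH^1(A)=\dim HH^1(Z(\Gamma))=1$. The arithmetic you carried out with $|Q_1|=2(|\Gamma_0|-1)$ and Lemmas~\ref{3.3}--\ref{3.4} belongs to the proof of Theorem~\ref{3.5}, not to the corollary.
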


\begin{remarks}
(1) Since Hochschild cohomology is invariant under derived equivalence, the dimension of the
 1th Hochschild cohomology group of each algebra derived equivalent to a zigzag algebra is 1.

(2) For a zigzag algebra of type A, Mazorchuk and Stroppel \cite{MS} computed $n$-th Hochschild cohomology group for all $n\geq 0$.
\end{remarks}

\bigskip


\bigskip

\end{document}